\journal{Discrete Applied Mathematics}
\begin{document}

	\newtheorem{definition}{Definition}
	\newtheorem{theorem}{Theorem}
	\newtheorem{corollary}{Corollary}
	\newtheorem{lemma}{Lemma}
	\newtheorem{conjecture}{Conjecture}
	
	\tikzset{middlearrow/.style={
			decoration={markings,
				mark= at position 0.7 with {\arrow[scale=2]{#1}} ,
			},
			postaction={decorate}
		}
	}

	\begin{frontmatter}
		
		%% Title, authors and addresses
		
		\title{On diregular digraphs with degree two and excess three}
		
		%% use the tnoteref command within \title for footnotes;
		%% use the tnotetext command for the associated footnote;
		%% use the fnref command within \author or \address for footnotes;
		%% use the fntext command for the associated footnote;
		%% use the corref command within \author for corresponding author footnotes;
		%% use the cortext command for the associated footnote;
		%% use the ead command for the email address,
		%% and the form \ead[url] for the home page:
		%%
		%% \title{Title\tnoteref{label1}}
		%% \tnotetext[label1]{}
		%% \author{Name\corref{cor1}\fnref{label2}}
		%% \ead{email address}
		%% \ead[url]{home page}
		%% \fntext[label2]{}
		%% \cortext[cor1]{}
		%% \address{Address\fnref{label3}}
		%% \fntext[label3]{}

		%% use optional labels to link authors explicitly to addresses:
		%% \author[label1,label2]{<author name>}
		%% \address[label1]{<address>}
		%% \address[label2]{<address>}
		
		\author{James Tuite}
		
		\address{Open University, Walton Hall, Milton Keynes}
		\ead{james.tuite@open.ac.uk}
		
		\begin{abstract}
			Moore digraphs, that is digraphs with out-degree $d$, diameter $k$ and order equal to the
			Moore bound $M(d, k) = 1 + d + d^2 + \dots + d^k$, arise in the study of optimal network
			topologies. In an attempt to find digraphs with a `Moore-like' structure, attention has
			recently been devoted to the study of small digraphs with minimum out-degree $d$ such
			that between any pair of vertices $u,v$ there is at most one directed path of length $\leq k$
			from $u$ to $v$; such a digraph has order $M(d,k)+\epsilon $ for some small excess $\epsilon $. Sillasen et al. have shown that there are no digraphs with minimum out-degree two and excess one
			\cite{MirSil,Sil}. The present author has classified all digraphs with
			out-degree two and excess two \cite{Tui,Tui2}. In this paper it is proven that there
			are no diregular digraphs with out-degree two and excess three for $k \geq 3$, thereby
			providing the first classification of digraphs with order three away from the Moore
			bound for a fixed out-degree.
		\end{abstract}
		
		\begin{keyword}
			Degree/diameter problem \sep Digraphs \sep Excess \sep Extremal digraphs 
			%% keywords here, in the form: keyword \sep keyword
			
			%% MSC codes here, in the form: \MSC code \sep code
			%% or \MSC[2008] code \sep code (2000 is the default)
			\MSC  05C35 \sep 90C35
		\end{keyword}
		
	\end{frontmatter}

	%% main text
	\section{Introduction}
	\label{S:1}
	
	The undirected degree/diameter problem asks for the largest possible order of a graph $G$ with given maximum degree
	$d$ and diameter $k$. This problem has applications in the design of efficient networks. A natural upper bound on the order of such a graph is
	$|V(G)| \leq 1 + d + d(d-1) + d(d-1)^2 +\dots + d(d-1)^{k-1}$, where the right-hand side of the inequality is the (undirected) \emph{Moore bound}. A graph is \emph{Moore} if it attains this upper bound. A graph is Moore if and only if it is regular with degree $d$, has diameter $k$ and girth $2k + 1$. The girth condition implies that a Moore graph is $k$-geodetic, i.e. any two vertices are connected by at most one non-backtracking walk of length not exceeding $k$. In the classic paper \cite{HofSin} Hoffman and Singleton show that for diameter $k = 2$ the Moore bound
	is achieved only for degrees $d = 2$, $3$, $7$ and possibly $57$. The unique Moore graphs for $k = 2$ and $d = 2$, $3$ and $7$ are the	$5$-cycle, the Petersen graph and the Hoffman–Singleton graph respectively. The existence of a Moore graph (or graphs)	with diameter $k = 2$ and degree $d = 57$ is a famous open problem. It was later shown by other authors \cite{BanIto,Dam} that for
	diameters $k \geq 3$ Moore graphs exist only in the trivial case $d = 2$.
	
	Given the scarcity of Moore graphs, it is of great interest to find graphs with a `Moore-like' structure. A survey of this problem is given in \cite{MilSir2}. Graphs with maximum degree $d$, diameter $k$ and order $\delta $ less than the Moore bound for some small defect $\delta $ have been studied intensively. In such graphs walks with length $\leq k$ between pairs of vertices are not necessarily unique; associated with each vertex $u$ is a repeat multiset $R(u)$, such that $v \in V(G)$ appears $t$ times in $R(u)$ if
	and only if there are $t+1$ distinct non-backtracking walks with length $\leq k$ between $u$ and $v$. An important result in this direction is that the only graphs with defect one are cycles of length $2k$ \cite{ErdFajHof,BanIto2,KurTsu}.
	
	Alternatively, one can preserve the $k$-geodecity condition and ask for the smallest $d$-regular graphs with girth $2k + 1$.	This is known as the degree/girth problem. A survey of this problem is given in \cite{ExoJaj}. A graph with minimal order subject to the above conditions is called a \emph{cage}.
	
	The directed version of the degree/diameter problem was posed in \cite{BriTou}. The Moore bound for a digraph with maximum out-degree $d$ and diameter $k$ is given by
\[ M(d,k) = 1+d+d^2+\dots +d^k.\]
	Similarly to the undirected case, a digraph is Moore if and only if it is out-regular with degree $d$, has diameter $k$ and is $k$-geodetic, i.e. for any (ordered) pair of vertices $u,v$ there is at most one directed walk from $u$ to $v$ with length $\leq k$. Using	spectral analysis, it was shown in \cite{BriTou} that Moore digraphs exist only in the trivial cases $d = 1$ and $k = 1$, the Moore digraphs being directed cycles of length $k + 1$ and complete digraphs of order $d + 1$ respectively.
	
	There is an extensive literature on digraphs with maximum out-degree $d$, diameter $k$ and order $M(d,k) -\delta $ for small defects $\delta $. Such digraphs arise from removing the $k$-geodecity condition in the requirements for a digraph to be Moore. As	in the undirected case, each vertex $u$ is associated with a repeat multiset $R(u)$, defined in the obvious manner. A digraph with defect $\delta = 1$ is an \emph{almost Moore digraph}; for such a digraph, in place of a set-valued function $R$, we can think of a repeat function $r: V(G) \rightarrow V(G)$. In contrast to the undirected problem, for diameter $k = 2$ there exists an almost Moore
	digraph for every value of $d$ \cite{FioYebAle}. It is known that there are no almost Moore digraphs with $d = 2$ and $k \geq 3$ \cite{MilFri}, $d = 3$ and $k \geq 3$ \cite{BasMilPleZna,BasMilSirSut} or diameters $k = 3, 4$ and $d \geq 2$ \cite{ConGimGonMilMir,ConGimGonMirMor2,ConGimGonMirMor}. It is also shown in \cite{MilSir} that there are no digraphs with degree	$d = 2$ and defect $\delta = 2$ for diameters $k \geq 3$.
	
	Approaching the problem of approximating Moore digraphs from a different perspective, there are several different
	ways to adapt the undirected degree/girth problem to the directed case, as the connection between $k$-geodecity and the	girth does not hold in the directed setting. The directed degree/girth problem, which concerns the minimisation of the	order of out-regular digraphs with given girth, is well developed (see \cite{She} for an introduction). A related problem is	considered in \cite{AraBalOls}. However, the extremal digraphs considered in these problems are in general not $k$-geodetic; in fact, in the directed degree/girth problem, it is conjectured that extremal orders are achieved by circulant digraphs \cite{BehChaWal}.
	
	If we wish to retain the $k$-geodecity condition, but relax the requirement that the diameter should equal $k$, we obtain
	the following problem: \emph{What is the smallest possible order of a $k$-geodetic digraph with minimum out-degree $d$}? A $k$-geodetic digraph $G$ with minimum out-degree $d$ and order $M(d,k) +\epsilon $ is called a $(d,k,+\epsilon )$-digraph, where $\epsilon > 0$ is the \emph{excess} of
	$G$. With each vertex $u$ of a $(d,k,+\epsilon )$-digraph we can associate the set $O(u) = \{ v \in V(G) : d(u, v) \geq k + 1\} $ of vertices	that cannot be reached by $\leq k$-paths from $u$; any element of this set is an \emph{outlier} of $u$. It is known that $(d,k,+1)$-digraphs
	are out-regular with degree $d$ \cite{MirSil}. For a digraph $G$ with excess $\epsilon = 1$, the set-valued function $O$ can be construed as	an \emph{outlier function} $o : V(G) \rightarrow V(G)$, where for each vertex $u$ of $G$ the outlier $o(u)$ of $u$ is the unique vertex of G with
	$d(u,o(u)) \geq k + 1$. We will refer to a $(d,k,+\epsilon )$-digraph with smallest possible excess as a $(d,k)$-geodetic-cage.
	
	The first paper to consider this problem was \cite{Sil}, in which Sillasen proves that there are no diregular $(2,k,+1)$-digraphs for $k \geq 2$. Strong conditions on non-diregular digraphs with excess one were also derived in this paper. These results were later strengthened \cite{MirSil} to show that any digraph with excess $\epsilon = 1$ must be diregular, thereby completing the proof of the non-existence of $(2,k,+1)$-digraphs. It is also known that $(d, k,+1)$-digraphs do not exist for $k = 2$ and $d > 7$ or $k = 3, 4$ for $d > 1$ \cite{MirSil}. In \cite{TuiErs} it is shown that for all $d,k \geq 2$ there exists a diregular $k$-geodetic digraph with out-degree $d$, so that geodetic cages exist for all values of $d$ and $k$, and that for any fixed $k$ the Moore bound can be approached asymptotically by arc-transitive $k$-geodetic digraphs as $d \rightarrow \infty $.
	
	In \cite{Tui} the present author has proven that for $k \geq 2$ any $(2,k,+2)$-digraphs must be diregular. Using an approach similar to that of \cite{MilSir} this analysis was completed in \cite{Tui2} by showing that there are no diregular $(2,k,+2)$-digraphs for $k \geq 3$ and classifying the diregular $(2,2,+2)$-digraphs up to isomorphism. There are exactly two $(2,2,+2)$-digraphs, which are displayed in Fig.\ref{fig:two cages}; these represent the only known non-trivial geodetic cages. New results have allowed the method of \cite{Tui2} to be extended to excess $\epsilon = 3$. In this paper, we therefore present a complete classification of diregular $(2,k,+3)$-digraphs for $k \geq 3$.

	\begin{figure}\centering
		\begin{tikzpicture}[middlearrow=stealth,x=0.2mm,y=-0.2mm,inner sep=0.1mm,scale=1.40,
			thick,vertex/.style={circle,draw,minimum size=5,font=\tiny,fill=black},edge label/.style={fill=white}]
			\tiny
			\node at (50,0) [vertex] (v8) {$$};
			\node at (140,130) [vertex] (v4) {$$};
			\node at (100,30) [vertex] (v7) {$$};
			\node at (-40,130) [vertex] (v5) {$$};
			\node at (100,190) [vertex] (v1) {$$};
			\node at (180,170) [vertex] (v6) {$$};
			\node at (0,30) [vertex] (v3) {$$};
			\node at (0,190) [vertex] (v0) {$$};
			\node at (-80,170) [vertex] (v2) {$$};

			\node at (350,0) [vertex] (w0) {$$};
			\node at (440,130) [vertex] (w1) {$$};
			\node at (400,30) [vertex] (w2) {$$};
			\node at (260,130) [vertex] (w3) {$$};
			\node at (400,190) [vertex] (w4) {$$};
			\node at (480,170) [vertex] (w5) {$$};
			\node at (300,30) [vertex] (w6) {$$};
			\node at (300,190) [vertex] (w7) {$$};
			\node at (220,170) [vertex] (w8) {$$};

			\path
			(w0) edge [middlearrow] (w1)
			(w0) edge [middlearrow] (w2)
			(w1) edge [middlearrow] (w3)
			(w1) edge [middlearrow] (w4)
			(w2) edge [middlearrow] (w5)
			(w2) edge [middlearrow] (w6)
			(w3) edge [middlearrow] (w0)
			(w3) edge [middlearrow] (w8)
			(w4) edge [middlearrow] (w5)
			(w4) edge [middlearrow] (w7)
			(w5) edge [middlearrow] (w1)
			(w5) edge [middlearrow] (w8)
			(w6) edge [middlearrow] (w0)
			(w6) edge [middlearrow] (w4)
			(w7) edge [middlearrow] (w2)
			(w7) edge [middlearrow] (w3)
			(w8) edge [middlearrow] (w6)
			(w8) edge [middlearrow] (w7)
			(v0) edge [middlearrow] (v1)
			(v0) edge [middlearrow] (v2)
			(v1) edge [middlearrow] (v3)
			(v1) edge [middlearrow] (v4)
			(v2) edge [middlearrow] (v5)
			(v2) edge [middlearrow] (v6)
			(v3) edge [middlearrow] (v2)
			(v3) edge [middlearrow] (v7)
			(v4) edge [middlearrow] (v5)
			(v4) edge [middlearrow] (v6)
			(v5) edge [middlearrow] (v0)
			(v5) edge [middlearrow] (v8)
			(v6) edge [middlearrow] (v1)
			(v6) edge [middlearrow] (v7)
			(v7) edge [middlearrow] (v0)
			(v7) edge [middlearrow] (v8)
			(v8) edge [middlearrow] (v3)
			(v8) edge [middlearrow] (v4)

			;
		\end{tikzpicture}
		\caption{The two $(2,2)$-geodetic-cages}
		\label{fig:two cages}
	\end{figure}
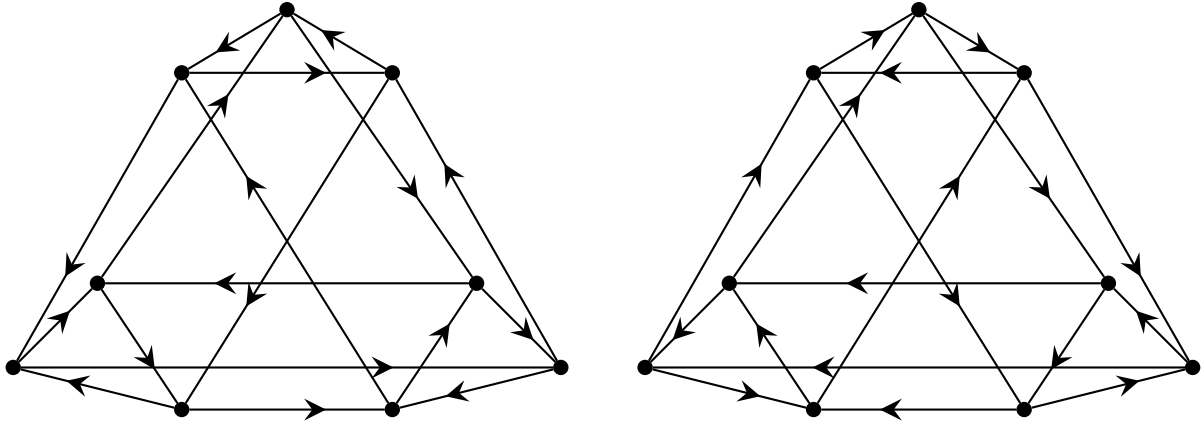

	\section{The neighbourhood lemma}
	
	Let us first establish our notation. Let $G$ be a directed graph with vertex set $V(G)$ and arc set $A(G)$. A directed walk of	length $r$ in $G$ is an alternating sequence of vertices and arcs $v_1e_1v_2e_2 \dots v_re_rv_{r+1}$ such that $e_i$ is an arc from $v_i$ to $v_{i+1}$ for
	$1 \leq i \leq r$. The trivial walk is a walk of length zero that consists of a single vertex. Given vertices $u,v$, the distance $d(u,v)$ from $u$ to $v$ is the length of a shortest walk in $G$ from $u$ to $v$ (we take this to be $\infty $ if there is no such walk); note that we can have $d(u,v) \not = d(v,u)$.	
	
	For vertices $u,v$ we will write $u \rightarrow v$ to indicate that there is an arc from $u$ to $v$ in $A(G)$. The set of out-neighbours of a vertex $u$ of $G$ is $N^+(u) = \{ v \in V(G) : u \rightarrow v\} $; similarly $N^-(u) = \{ v \in V(G) : v \rightarrow u\} $ is the set of in-neighbours of $u$. More generally, for $l > 0$ $N^l(u)$ will stand for the set of vertices that are end-points of walks of length $l$ with initial point $u$ and $N^{-l}(u)$ for the set of vertices that are the initial points of $l$-walks that terminate at $u$. Trivially	$N^0(u) = \{ u\} $, $N^1(u) = N^+(u)$ and $N^{-1}(u) = N^-(u)$. If $S$ is a set of vertices of $G$, then we define $N^+(S)$ to be the multiset $\cup _{v \in S}N^+(v)$. For $0 \leq l \leq k$ the set of vertices that lie within a distance $l$ from a vertex $u$ will be denoted by $T_l(u)$; hence $T_l(u) = \cup _{i=0}^lN^l(u)$. The set $T_{k-1}(u)$ will be written as $T(u)$ for short and will be indicated in diagrams by a triangle based at the vertex $u$. The out-degree of a vertex $u$ is $d^+(u) = |N^+(u)|$ and the in-degree is $d^-(u) = |N^-(u)|$; $G$ is diregular with degree $d$ if $d^+(u) = d^-(u) = d$ for every vertex $u \in V(G)$.
	
In this paper $G$ will stand for a diregular $(d,k,+\epsilon )$-digraph, i.e. a diregular digraph with degree $d$ and order $M(d,k)+\epsilon = 1 + d + d^2+ \dots + d^k + \epsilon $ that is $k$-geodetic, so that for all $u,v \in V(G)$ if there exists a walk $P$ from $u$ to $v$ of length $\leq k$, then it
is the unique such walk. For each vertex $u$ of $G$ there are exactly $\epsilon $ vertices that lie at distance $\geq k+1$ from $u$; the set $O(u)$ of these $\epsilon $ vertices is the \emph{outlier set} of $u$ and each element of $O(u)$ is an \emph{outlier} of $u$. We have $O(u) = V(G) - T_k(u)$.
For $S \subseteq V(G)$ we define $O(S)$ to be the multiset union $\cup {v \in S}O(v)$.

For digraphs with order close to the Moore bound there is a useful interplay between the combinatorial notions of
repeat and outlier and the symmetries of the digraph. For digraphs with defect $\delta = 1$, the repeat function $r$ was shown to be a digraph automorphism in \cite{BasMilPleZna} by a counting argument. This can also be proven by a short matrix argument \cite{Gim}. In her thesis \cite{SilThesis} Sillasen extended this result for almost Moore digraphs to digraphs with larger defects, showing that for any vertex $u$ in a diregular digraph with defect $\delta \geq 2$ the multiset equation $N^+(R(u)) = R(N^+(u))$ holds. This relationship
is known as the \emph{Neighbourhood Lemma}. In \cite{Sil} Sillasen demonstrated that there is a strong analogy between the structure of almost Moore digraphs and digraphs with excess one by proving, by an argument similar to that presented in \cite{Gim}, that the outlier function $o$ of a diregular $(d,k,+1)$-digraph is an automorphism. We now complete this line of reasoning by showing that a Neighbourhood Lemma holds for digraphs with small excess $\epsilon \geq 2$.

	\begin{lemma}[Neighbourhood Lemma]\label{neighbourhoodlemma}
		Let $G$ be a diregular $(d,k,+\epsilon )$-digraph for any $d,k \geq 2$ and $\epsilon \geq 1$. Then for any vertex $u$ of $G$ we have \[ O(N^+(u)) = N^+(O(u))\] as multisets.
	\end{lemma}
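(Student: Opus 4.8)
The plan is to prove the multiset equation $O(N^+(u)) = N^+(O(u))$ by a double-counting argument that matches each element on the left with one on the right. The core idea is to understand, for a vertex $u$, which vertices fail to be reached by short paths from the out-neighbours of $u$, and to relate this to the outliers of $u$ themselves. Concretely, I would fix $u$ and analyse the relationship between distances from $u$ and distances from its out-neighbours. The key structural observation is this: if $w$ is an out-neighbour of some outlier $x \in O(u)$, then any path of length $\leq k$ from some out-neighbour $v$ of $u$ to $w$ would, when prepended with the arc $u \to v$, yield a short-ish path from $u$; I must track carefully how the $k$-geodecity and the distance-$\geq k+1$ condition for outliers constrain these paths.

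\medskip

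First I would set up the counting on the right-hand side. The multiset $N^+(O(u))$ has exactly $d\epsilon$ elements (counting multiplicity), since $|O(u)| = \epsilon$ and $G$ is out-regular of degree $d$. Likewise $O(N^+(u))$ has $d\epsilon$ elements, since $|N^+(u)| = d$ and each out-neighbour has exactly $\epsilon$ outliers. So both multisets have the same cardinality, and it suffices to exhibit a multiplicity-preserving bijection, or equivalently to show that for every vertex $z$, the number of times $z$ appears in $O(N^+(u))$ equals the number of times it appears in $N^+(O(u))$. I would phrase this as: the number of out-neighbours $v$ of $u$ for which $z \in O(v)$ equals the number of in-neighbours $x$ of $z$ for which $x \in O(u)$.

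\medskip

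The heart of the argument is a distance translation lemma relating $d(u,\cdot)$ to $d(v,\cdot)$ for an out-neighbour $v$ of $u$. The crucial claim is that for a fixed out-neighbour $v$ of $u$, a vertex $z$ is an outlier of $v$ (i.e. $d(v,z) \geq k+1$) if and only if every in-neighbour $x$ of $z$ satisfies a corresponding ``one step further'' condition with respect to $u$. I would exploit $k$-geodecity to show that the unique short path structure forces the in-neighbours of $z$ to be controlled by the reachability from $u$: intuitively, $z$ is reachable from $v$ in $\leq k$ steps precisely when $z$ has an in-neighbour reachable from $u$ in $\leq k-1$ steps along the path through $v$, and the geodetic condition guarantees that such contributions are counted without overlap. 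Summing the local count over all out-neighbours $v$ of $u$ and reorganising the sum over in-neighbours of $z$ should convert $\sum_{v \in N^+(u)} [z \in O(v)]$ into $\sum_{x \in N^-(z)} [x \in O(u)]$, which is exactly the desired equality of multiplicities.

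\medskip

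I expect the main obstacle to be handling the boundary cases where a vertex $z$ or one of its in-neighbours lies close to $u$, so that the naive ``shift distances by one'' heuristic breaks down. In particular, I must be careful that prepending the arc $u \to v$ to a $k$-path from $v$ to $z$ produces a path of length exactly $k+1$ and not a shorter path created by some coincidental arc, and that no vertex is double-counted when it is simultaneously reachable through two different out-neighbours of $u$ — this is precisely where $k$-geodecity of $G$ (uniqueness of short paths) does the essential work. I would therefore structure the proof around the uniqueness of $\leq k$-paths to rule out such overlaps, making the local-to-global summation clean and multiplicity-preserving.
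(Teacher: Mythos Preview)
Your proposal is correct and essentially identical to the paper's approach: the paper also fixes a vertex, counts its multiplicity in each multiset, and uses $k$-geodecity to set up the same bijection between in-neighbours of that vertex lying in $T_{k-1}(u_i)$ and out-neighbours $u_i$ of $u$ that reach it within $k$ steps. The paper's proof is simply a compressed version of your outline, with the boundary case you flag (when $u$ itself is an in-neighbour of the target vertex) handled separately by a one-line remark.
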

	\begin{proof}
	As $G$ is diregular, any vertex can occur at most $d$ times in either multiset. Suppose that a vertex $v$ occurs $t$ times in $N^+(O(u))$. Let $N^-(v) = \{ v_1,v_2, \dots ,v_t ,v_{t+1},\dots ,v_d\} $ and $N^+(u) = \{ u_1,u_2,\dots , u_d\} $, where $O(u) \cap N^-(v) = \{ v_1,v_2,\dots ,v_t \} $. Suppose that $u \not \in N^-(v)$. Since no set $T(u_i)$ contains more than one in-neighbour of $v$ by $k$-geodecity, there are exactly $d-t$ out-neighbours of $u$ that can reach $v$ by a $\leq k$-walk, so that $v$ occurs $t$ times in $O(N^+(u))$. A similar argument deals with the case $u \in N^-(v)$. As both multisets have size $d\epsilon $, this implies the result.
	\end{proof}
	It is pleasing to regard the Neighbourhood Lemma for diregular digraphs with small excess as a limiting case of Lemmas 2 and 3 of \cite{Tui} for non-diregular digraphs.
	
	\section{Main Result}

	For the remainder of this paper $G$ will be a diregular $(2, k,+3)$-digraph for some $k \geq 3$. Geodetic cages for out-degree $d = 2$ and $k = 2$ have been found to have excess two \cite{Tui,Tui2}; for completeness, we mention that there are $(2,2,+3)$-digraphs, both diregular and non-diregular. We will now complete the classification of diregular $(2,k,+3)$-digraphs by showing that for $k \geq 3$ diregular $(2,k,+\epsilon )$-digraphs have excess $\epsilon \geq 4$. The case $k = 3$ is more involved and is discussed
	in the next section.
	
	We employ the following labelling convention for vertices at distance $\leq k$ from a vertex $u$ of $G$. The out-neighbours of $u$ will be labelled according to $N^+(u) = \{ u_1,u_2\} $ and vertices at a greater distance from $u$ are labelled inductively as follows: $N^+(u_1) = \{ u_3,u_4\} $, $N^+(u_2) = \{ u_5,u_6\} $, $N^+(u_3) = \{ u_7,u_8\} $ and so on. See Fig. \ref{fig:kgeq3setup} for an example. A first step in previous studies \cite{MilFri,MilSir,Tui2} of digraphs with degree two and order close to the Moore bound has been to establish the existence of a pair of vertices with exactly one out-neighbour in common. The argument of \cite{Tui2} can be generalised to show that for degree two such a pair exists for any even excess $\epsilon $. For $\epsilon = 3$, we can establish the existence of the necessary pair as
	follows.

	\begin{theorem}\label{nice pair exists}
		For $k \geq 3$, any diregular $(2,k,+3)$-digraph $G$ contains a pair of vertices $u,v$ with exactly one common
		out-neighbour.
	\end{theorem}
	\begin{proof}
		Let $G$ be a diregular $(2,k,+3)$-digraph without the required pair of vertices. Then all out-neighbourhoods are either disjoint or identical. Then by Heuchenne’s condition $G$ is the line digraph of a digraph $H$ with degree two \cite{Heu}. $H$ must be at least $(k-1)$-geodetic. As $2|V(H)| = |V(G)|$, $H$ must be a $(2,k-1,+2)$-digraph. Since the line digraphs of the
		$(2,2)$-geodetic-cages are not $3$-geodetic and there are no $(2,k,+2)$-digraphs for $k \geq 3$ \cite{Tui2}, we have a contradiction.
	\end{proof}
	
	There is no guarantee that distinct vertices do not have identical out-neighbourhoods; witness the geodetic-cage on
	the left of Fig. \ref{fig:two cages}. However, we can say a great deal about the outlier sets of such vertices. The proof of the following lemma is practically identical to that of the corresponding result for $\epsilon = 2$ in \cite{Tui2} and is omitted.
	
	\begin{lemma}\label{lemma 2}
	Let $z,z'$ be vertices of a $(d,k,+\epsilon )$-digraph $H$ for some $\epsilon \geq 1$. If $N^+(z) = N^+(z')$, then there exists a set $X$ of $\epsilon -1$	vertices of $H$ such that $O(z) = \{ z'\} \cup X$, $O(z') = \{ z\} \cup X$.
	\end{lemma}
	
	We now fix an arbitrary pair of vertices $u,v$ of $G$ with a unique out-neighbour in common. We will assume that
	$u_2 = v_2$, so that, following the vertex labelling convention established earlier, we have the situation shown in Fig. \ref{fig:kgeq3setup}. We will also write $N^-(u_1) = \{ u,u^- \} $, $N^-(v_1) = \{ v,v^- \} $, $N^+(u^-) = \{ u_1,u^+\} $ and $N^+(v^-) = \{ v_1, v^+\} $. It is easily seen that $u^- \not = v$ and $v^- \not = u$.

	We can make some immediate deductions concerning the position of the vertices $u,v$ and $u_2$ in the diagram in Fig. \ref{fig:kgeq3setup}.
	\begin{lemma}\label{lemma 3} 
		$v \in N^{k-1}(u_1) \cup O(u)$ and $u \in N^{k-1}(v_1) \cup O(v)$. If $v \in O(u)$, then $u_2 \in O(u_1)$ and if $u \in O(v)$, then $u_2 \in O(v_1)$.
		\end{lemma}
	\begin{proof} 
	$v$ cannot lie in $T(u)$, or the vertex $u_2$ would be repeated in $T_k(u)$. Also, $v \not in T(u_2)$, or else there would be a $\leq k$-cycle	through $v$. Therefore, if
	$v \not \in O(u)$, then $v \in N^{k-1}(u_1)$. Likewise for the other result. If $v \in O(u)$, then neither in-neighbour
	of $u_2$ lies in $T(u_1)$, so that $u_2 \in O(u_1)$. 
\end{proof}
	
	The following lemma is the main tool in our analysis.
	
	\begin{lemma}[Contraction Lemma]
	 Let $w \in T(v_1)$, with $d(v_1,w) = l$. Suppose that $w \in T(u_1)$, with $d(u_1,w) = m$. Then either
	$m \leq l$ or $w \in N^{k-1}(u_1)$. A similar result holds for $w \in T(u_1)$.
\end{lemma}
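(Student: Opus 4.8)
The plan is to argue by contradiction, assuming $w \notin N^{k-1}(u_1)$ (so that $m \leq k-2$) together with $m > l$, and to manufacture more than three outliers of $u$. First I would record the structural facts forced by $k$-geodecity and the order $M(2,k)+3$: since every $\leq k$-path out of a vertex is unique and the out-degree is $2$, the set $T_k(u)$ is a complete binary out-tree of depth $k$ on $M(2,k)$ vertices, whence $|O(u)| = |V(G)| - |T_k(u)| = 3$, and likewise for $v$. Because $u_2 = v_2$, the subtree $T(u_2)$ occupies the $u_2$-branch of both $T_k(u)$ and $T_k(v)$; I would use repeatedly that distinct branches of these trees are disjoint, that $T_k(u) = \{u\} \cup T(u_1) \cup T(u_2)$, and that $G$ contains no cycle of length $\leq k$ (such a cycle would give two $\leq k$-paths from a vertex to itself).

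The heart of the argument is a counting step applied to the descendants of $w$ at depth exactly $k-m$, that is the set $N^{k-m}(w)$, which has $2^{k-m} \geq 4$ elements. For each such $y$ I would show $y \in O(u)$. Since $w \in T(v_1)$ at distance $l$ and $l < m$, the unique $\leq k$-path from $v$ to $y$ runs $v \to v_1 \to \cdots \to w \to \cdots \to y$ and has length $l+1+(k-m) \leq k$, so $y \in T_k(v)$ and lies in the $v_1$-branch; by branch-disjointness $y \notin T(u_2)$. Moreover, concatenating the length-$m$ path from $u_1$ to $w$ with the length-$(k-m)$ path from $w$ to $y$ produces a walk of length exactly $k$ from $u_1$ to $y$, which is a genuine path because $G$ has no cycle of length $\leq k$; hence if $y \in T(u_1)$ there would be two distinct $\leq k$-paths from $u_1$ to $y$, contradicting $k$-geodecity. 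Thus $y \notin T(u_1)$, so $y \in T_k(u)$ forces $y = u$. As at most one element of $N^{k-m}(w)$ equals $u$, this yields at least $2^{k-m}-1$ outliers of $u$.

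This disposes of most cases: if $m \leq k-3$ then $2^{k-m}-1 \geq 7 > 3 = |O(u)|$, a contradiction; and if $m = k-2$ with $u \notin N^2(w)$ then all four vertices of $N^2(w)$ are outliers, again a contradiction. The only surviving configuration is $m = k-2$ with $u \in N^2(w)$; here Lemma \ref{position of v} forces $u \in N^{k-1}(v_1)$, so $u$ sits at depth $k$ of the $v$-tree and hence $l = k-3$, and $O(u) = N^2(w) \setminus \{u\}$. Assuming $k \geq 4$ (the case $k=3$, where this boundary degenerates to the forbidden arc $u_1 \to v_1$, is the separately stated theorem), $w$ has a predecessor $p$ on its shortest path from $v_1$. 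I would split on whether $p$ coincides with the predecessor of $w$ on its shortest path from $u_1$: if they differ, the same reasoning as above places $p \in O(u) = N^2(w)\setminus\{u\}$, so $d(w,p)=2$ while $p \to w$, a $3$-cycle contradicting $k$-geodecity; if they coincide, $p$ lies at distances $k-4$ and $k-3$ from $v_1$ and $u_1$, which falls in the clean regime (its $m$-value equals $k-3 \leq k-3$) and so contradicts the counting step applied to $p$ in place of $w$.

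The step I expect to be the main obstacle is precisely the verification that the descendants $N^{k-m}(w)$ are genuine outliers of $u$: one must simultaneously prevent them from slipping into the shared subtree $T(u_2)$ (where the tree structure and branch-disjointness of $T_k(v)$ are essential) and from re-entering $T(u_1)$ along an unrelated short path (controlled by the fact that the natural walk through $w$ has length exactly $k$ together with the absence of $\leq k$-cycles). The genuinely delicate point is the degenerate possibility that the root $u$ is itself one of the counted descendants, which collapses the count from four to three and forces the separate boundary analysis above. The symmetric statement for $w \in T(u_1)$ follows by interchanging the roles of $u$ and $v$.
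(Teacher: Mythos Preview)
Your core idea is exactly the paper's: assume $m>l$ and $m\le k-2$, look at $N^{k-m}(w)$, observe that these $2^{k-m}$ vertices lie in $T(v_1)$ (hence avoid $T(u_2)$) and are endpoints of length-$k$ walks from $u_1$ (hence avoid $T(u_1)$), and conclude $N^{k-m}(w)\subseteq\{u\}\cup O(u)$, forcing $2^{k-m}\le 4$. So the reduction to the single boundary case $m=k-2$, $N^2(w)=\{u\}\cup O(u)$ is identical to the paper's proof.

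Where you diverge is in disposing of this boundary case, and here your argument has two weaknesses. First, your appeal for $k=3$ to ``the separately stated theorem'' is not admissible: the Contraction Lemma is used downstream (Corollaries~\ref{contraction corollary}--\ref{position of v1}, Lemma~5, Theorem~\ref{main lemma}) for $k\ge 3$, and Theorem~1 is stated in the paper without proof, so you cannot lean on it here without risking circularity. Your predecessor-of-$w$ argument genuinely needs $l\ge 1$, i.e.\ $k\ge 4$, so $k=3$ is left open. Second, in your Case~1 ($p\neq q$) the phrase ``the same reasoning as above'' is not literally the same reasoning: descendants of $w$ were excluded from $T(u_1)$ because they sit at distance exactly $k$ from $u_1$, but $p$ is a \emph{predecessor} of $w$. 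What you actually need is the observation that if $p\in T(u_1)$ then the path $u_1\to\cdots\to p\to w$ has length $\le k$, so by $k$-geodecity it must coincide with the unique $u_1$--$w$ path, forcing $p=q$. This is easy but should be said.

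The paper's treatment of $m=k-2$ is both shorter and uniform in $k\ge 3$: from $N^2(w)=\{u\}\cup O(u)$ one notes that $v,v_1\notin N^2(w)$ (either would create a $\le k$-cycle through $w\in T(v_1)$), hence $v,v_1\notin O(u)$. Lemma~\ref{position of v} then gives $v\in N^{k-1}(u_1)$; and since $v_1\neq u$, $v_1\notin T(u_2)$, $v_1\notin O(u)$, we get $v_1\in T(u_1)$. But the arc $v\to v_1$ now produces a second $\le k$-path from $u_1$ to $v_1$, the desired contradiction. This avoids your predecessor analysis entirely and covers $k=3$.
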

\begin{proof}
	Let $w$ be as described and suppose that $m >l$. Consider the set $N^{k-m}(w)$. By construction, $N^{k-m}(w) \subseteq N^k(u_1)$, so by $k$-geodecity $N^{k-m}(w) \cap T(u_1) = \emptyset $. At the same time, we have $l + k - m \leq k - 1$, so $N^{k-m}(w) \subseteq T(v_1)$. This implies
	that $N^{k-m}(w) \cap T(v_2) = N^{k-m}(w) p\cap T(u_2) = \emptyset $. As $V(G) = \{ u\} \cup T(u_1) \cup T(u_2) \cup O(u)$, it follows that $N^{k-m}(w) \subseteq \{ u\} \cup O(u)$. Therefore $|N^{k-m}(w)| = 2^{k-m} \leq 4$, so either $m = k - 1$ or $m = k - 2$. Suppose that $m = k - 2$; then $N^2(w) = \{ u\} \cup O(u)$. Neither $v$ nor $v_1$ lies in $N^2(w)$, so that neither $v$ nor $v_1$ lies in $O(u)$. By $k$-geodecity and Lemma \ref{lemma 3}, $v \in N^{k-1}(u_1)$ and	$v_1 \in T(u_1)$, so that $v_1$ appears twice in $T_k(u_1)$. Thus $m = k - 1$.
\end{proof}

	\begin{figure}\centering
		\begin{tikzpicture}[middlearrow=stealth,x=0.2mm,y=-0.2mm,inner sep=0.1mm,scale=1.95,
			thick,vertex/.style={circle,draw,minimum size=13,font=\tiny,fill=white},edge label/.style={fill=white}]
			\tiny
			\node at (150,0) [vertex] (v0) {$u$};
			\node at (90,50) [vertex] (v1) {$u_1$};
			\node at (200,50) [vertex] (v2) {$u_2$};
			\node at (310,50) [vertex] (v3) {$v_1$};
			\node at (110,100) [vertex] (v4) {$u_4$};
			\node at (180,100) [vertex] (v5) {$u_5$};
			\node at (220,100) [vertex] (v6) {$u_6$};
			\node at (250,0) [vertex] (v7) {$v$};
			\node at (70,100) [vertex] (v9) {$u_3$};
			\node at (290,100) [vertex] (v10) {$v_3$};
			\node at (330,100) [vertex] (v11) {$v_4$};
			\node at (60,125) [vertex] (v12) {$u_7$};
			\node at (100,125) [vertex] (v13) {$u_9$};
			\node at (170,125) [vertex] (v14) {$u_{11}$};
			\node at (210,125) [vertex] (v15) {$u_{13}$};
			\node at (280,125) [vertex] (v16) {$v_7$};
			\node at (320,125) [vertex] (v17) {$v_9$};
			\node at (80,125) [vertex] (v18) {$u_8$};
			\node at (120,125) [vertex] (v19) {$u_{10}$};
			\node at (190,125) [vertex] (v20) {$u_{12}$};
			\node at (230,125) [vertex] (v21) {$u_{14}$};
			\node at (300,125) [vertex] (v22) {$v_8$};
			\node at (340,125) [vertex] (v23) {$v_{10}$};
			\node at (50,0) [vertex] (v24) {$u^-$};
			\node at (350,0) [vertex] (v25) {$v^-$};
			\node at (10,50) [vertex] (v26) {$u^+$};
			\node at (390,50) [vertex] (v27) {$v^+$};
			
			\path
			(v0) edge [middlearrow] (v1)
			(v0) edge [middlearrow] (v2)
			(v1) edge [middlearrow] (v9)
			(v1) edge [middlearrow] (v4)
			(v2) edge [middlearrow] (v5)
			(v2) edge [middlearrow] (v6)
			(v7) edge [middlearrow] (v2)
			(v7) edge [middlearrow] (v3)
			(v3) edge [middlearrow] (v10)
			(v3) edge [middlearrow] (v11)
			
			(v9) edge [middlearrow] (v12)
			(v9) edge [middlearrow] (v18)
			(v4) edge [middlearrow] (v13)
			(v4) edge [middlearrow] (v19)
			(v5) edge [middlearrow] (v14)
			(v5) edge [middlearrow] (v20)
			(v6) edge [middlearrow] (v15)
			(v6) edge [middlearrow] (v21)
			(v10) edge [middlearrow] (v16)
			(v10) edge [middlearrow] (v22)
			(v11) edge [middlearrow] (v17)
			(v11) edge [middlearrow] (v23)
			
			(v24) edge [middlearrow] (v1)
			(v25) edge [middlearrow] (v3)
			(v24) edge [middlearrow] (v26)
			(v25) edge [middlearrow] (v27)
			
			;
			\draw (58,130) -- (38,180);
			\draw (122,130) -- (142,180);
			\draw (168,130) -- (148,180);
			\draw (232,130) -- (252,180);
			\draw (278,130) -- (258,180);
			\draw (342,130) -- (362,180);

			\draw (38,180)--(142,180);
			\draw (148,180)--(252,180);
			\draw (258,180)--(362,180);

		\end{tikzpicture}
		\caption{Configuration for $k \geq 3$}
		\label{fig:kgeq3setup}
	\end{figure}
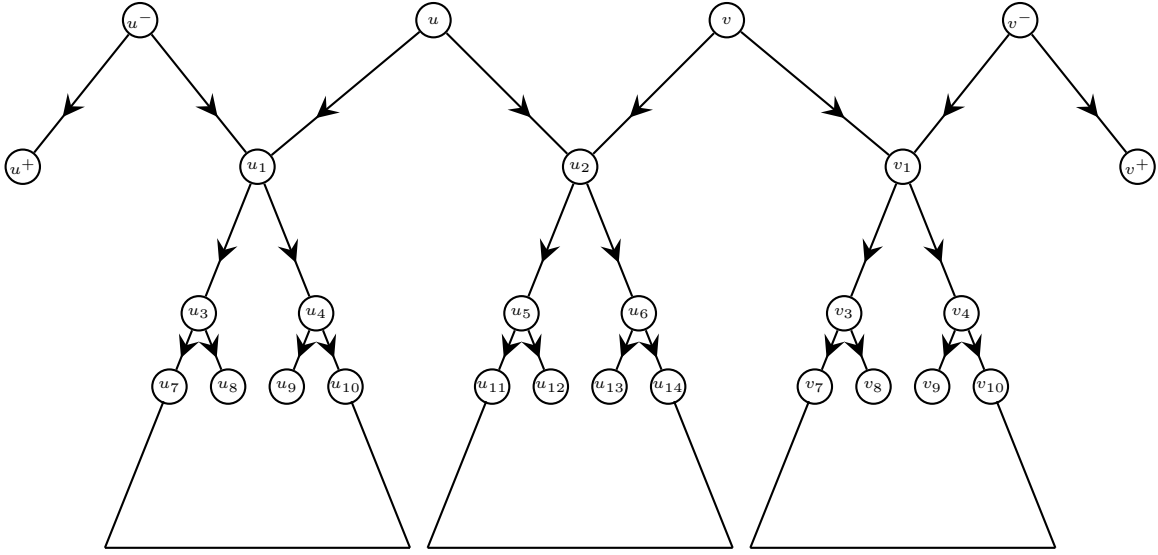
	
	\begin{corollary}\label{corollary 1} 
	If $w \in T(v_1)$, then either $w \in \{ u\} \cup O(u)$ or $w \in T(u_1)$ with $d(u_1,w) = k - 1$ or $d(u_1,w) \leq d(v_1,w)$.
\end{corollary}
	This allows us to restrict the possible positions of $u_1$ and $v_1$ in Fig. \ref{fig:kgeq3setup}.
	
	\begin{corollary}\label{corollary 2}
		$v_1 \in N^{k-1}(u_1) \cup O(u)$ and $u_1 \in N^{k-1}(v_1) \cup O(v)$.
	\end{corollary}
\begin{proof}
	We prove the first inclusion. By Corollary \ref{corollary 1}, $v_1 \in \{ u\} \cup O(u) \cup \{ u_1\} \cup N^{k-1}(u_1)$. By $k$-geodecity $v_1 \not = u$ and by
	construction $v_1 \not = u_1$. 
\end{proof}

	\begin{corollary}\label{corollary 3}
		If $v_1 \not \in O(u)$, then $O(u) = \{ v,v_3,v_4\} $, with a similar result for $v$.
	\end{corollary}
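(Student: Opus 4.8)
The plan is to exploit that, since the excess is $\epsilon = 3$, the outlier set $O(u)$ has exactly three elements; hence it suffices to exhibit three distinct outliers of $u$, and the natural candidates are $v$ together with the two out-neighbours $v_3, v_4$ of $v_1$. Throughout I would use the standing hypothesis $v_1 \notin O(u)$, which by Corollary \ref{position of v1} forces $v_1 \in N^{k-1}(u_1)$; that is, $v_1$ sits at the leaf level of the out-tree $T(u_1)$, at distance exactly $k-1$ from $u_1$.

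First I would show $v \in O(u)$. Suppose not; then Lemma \ref{position of v} places $v \in N^{k-1}(u_1)$. Since $v \to v_1$ and both $v$ and $v_1$ lie at distance $k-1$ from $u_1$, concatenating the $(k-1)$-path from $u_1$ to $v$ with the arc $v \to v_1$ yields a $k$-path from $u_1$ to $v_1$, while $v_1 \in N^{k-1}(u_1)$ already provides a $(k-1)$-path; these two paths are distinct and both have length $\leq k$, contradicting $k$-geodecity. Hence $v \in O(u)$.

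Next I would show $v_3, v_4 \in O(u)$, treating $v_3$ (the argument for $v_4$ being identical). Since $d(v_1,v_3) = 1 \leq k-1$, we have $v_3 \in T(v_1)$, so Corollary \ref{contraction corollary} restricts $v_3$ to one of: $\{u\} \cup O(u)$; or $N^{k-1}(u_1)$; or a vertex of $T(u_1)$ at distance $\leq 1$ from $u_1$. The main work is to eliminate every alternative to $v_3 \in O(u)$, in each case by manufacturing either a cycle of length $\leq k$ or a repeated $\leq k$-path. The possibility $v_3 = u$ is excluded because $v \to v_1 \to v_3$ would give $d(v_1,u) = 1$ and $d(v,u) = 2$, contradicting Lemma \ref{position of v}, which forces $u \in N^{k-1}(v_1)$ (distance $k-1 \geq 2$) or $u \in O(v)$ (distance $\geq k+1$). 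If $v_3 = u_1$, then the arc $v_1 \to u_1$ together with the $(k-1)$-path from $u_1$ to $v_1$ closes a $k$-cycle, which is forbidden. If $v_3 \in N^+(u_1) = \{u_3,u_4\}$, then the length-$1$ path $u_1 \to v_3$ and the length-$k$ detour $u_1 \to \cdots \to v_1 \to v_3$ are two distinct $\leq k$-paths from $u_1$ to $v_3$, again impossible. Finally, if $v_3 \in N^{k-1}(u_1)$, the same argument as for $v$ above (with $v_3$ in place of $v_1$) produces two distinct $\leq k$-paths from $u_1$ to $v_3$. Thus $v_3 \in O(u)$, and likewise $v_4 \in O(u)$.

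To conclude, I would check that $v, v_3, v_4$ are pairwise distinct: $v_3 \neq v_4$, since a repeated out-neighbour of $v_1$ would give two $1$-paths; and $v \neq v_3, v_4$, since $v = v_i$ would create the $2$-cycle $v \to v_1 \to v$. As $O(u)$ has exactly three elements and contains all of $v, v_3, v_4$, we obtain $O(u) = \{v, v_3, v_4\}$; the statement for $v$ follows by interchanging the roles of $u$ and $v$ (and of $u_1, v_1$). I expect the case analysis in the third step to be the principal obstacle, since it requires carefully excluding each ``shallow'' and ``leaf-level'' position for $v_3$ permitted by the Contraction Lemma; the remaining steps are short $k$-geodecity arguments.
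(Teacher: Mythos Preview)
Your proof is correct and is exactly the argument the paper has in mind (it merely cites the analogous result from \cite{Tui2}): once $v_1 \in N^{k-1}(u_1)$, the Contraction Corollary plus routine $k$-geodecity checks force each of $v, v_3, v_4$ into $O(u)$, and distinctness is immediate. One minor caveat worth tightening: in the sub-case $v_3 \in \{u_3,u_4\}$ your length-$k$ walk $u_1 \to \cdots \to v_1 \to v_3$ is not automatically a \emph{path} (it fails if $v_3$ is the first step of the $(k-1)$-path from $u_1$ to $v_1$), but in that event $v_3 \to \cdots \to v_1 \to v_3$ is a $(k-1)$-cycle, so $k$-geodecity is still violated and the case closes all the same.
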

	\begin{proof}
		Suppose that $v_1 \not \in O(u)$; then by the preceding corollary $v_1 \in N^{k-1}(u_1)$. By Lemma \ref{lemma 3} if $v$ is also not an outlier of $u$, then $v \in N^{k-1}(u_1)$, so that $v_1$ appears twice in $T_k(u_1)$. If $v_3 \not \in O(u)$, then by $k$-geodecity $v_3 \in T(u_1)$; however, as $v_1 \rightarrow v_3$, $v_3$ would appear twice in $T_k(u_1)$. The same reasoning applies to $v_4$. 
\end{proof}
	
	\begin{lemma}\label{lemma 5}
		For $k \geq 3$, either $v_1 \in O(u)$ or $u_1 \in O(v)$.
	\end{lemma}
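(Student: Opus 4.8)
The plan is to argue by contradiction, assuming that both $v_1 \notin O(u)$ and $u_1 \notin O(v)$, and to show that this forces an impossible structure. By Corollary~\ref{position of v1}, the assumption $v_1 \notin O(u)$ places $v_1 \in N^{k-1}(u_1)$, and symmetrically $u_1 \notin O(v)$ places $u_1 \in N^{k-1}(v_1)$. I would first extract the consequences of the preceding corollary: since $v_1 \notin O(u)$ we have $O(u) = \{v, v_3, v_4\}$, and since $u_1 \notin O(v)$ we have $O(v) = \{u, u_3, u_4\}$ (applying the symmetric version). The goal is then to derive a contradiction from the simultaneous existence of the two directed paths of length $k-1$, namely one from $u_1$ reaching $v_1$ and one from $v_1$ reaching $u_1$.

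The key observation is that $v_1 \in N^{k-1}(u_1)$ and $u_1 \in N^{k-1}(v_1)$ together produce a closed directed walk through $u_1$ and $v_1$ of length $2(k-1) = 2k-2$. First I would verify that this walk is actually a cycle, or at least that its existence is incompatible with $k$-geodecity combined with the diregularity and excess-three constraints. The natural route is to apply the Contraction Lemma (Lemma~\ref{contraction lemma}) in both directions: since $u_1 \in N^{k-1}(v_1)$ we have $u_1 \in T(v_1)$ with $d(v_1, u_1) = k-1$, and since $v_1 \in N^{k-1}(u_1)$ we have $v_1 \in T(u_1)$ with $d(u_1, v_1) = k-1$. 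I expect that tracking the image sets $N^i(u_1)$ as they re-enter $T(v_1)$ and $T(u_1)$ will reveal a forced coincidence of vertices that violates $k$-geodecity, or else force one of the outlier sets to be too large.

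A cleaner line of attack may be to count using the Neighbourhood Lemma (Lemma~\ref{neighbourhoodlemma}) together with the explicit outlier sets. Since $O(u) = \{v, v_3, v_4\}$, the lemma gives $O(N^+(u)) = N^+(O(u))$, so the combined outlier multiset of $u_1$ and $u_2$ equals $N^+(\{v, v_3, v_4\}) = \{v_1, u_2\} \cup N^+(v_3) \cup N^+(v_4)$; a symmetric identity holds for $v$. I would exploit the fact that $u_2 = v_2$ is the shared out-neighbour to trace how the outlier structure of the out-neighbours propagates, and combine this with the positional data $v_1 \in N^{k-1}(u_1)$, $u_1 \in N^{k-1}(v_1)$ to pin down the outlier sets of $u_1$ and $v_1$ themselves. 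The contradiction should emerge when the long cycle through $u_1$ and $v_1$ forces a repeated vertex in some $T_k(\cdot)$, or forces an outlier multiset to contain a vertex with the wrong multiplicity.

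The main obstacle I anticipate is controlling the interaction between the two length-$(k-1)$ paths without a concrete value of $k$: the Contraction Lemma bounds distances but does not immediately close the cycle into a short enough contradiction, so the bookkeeping of which vertices of $N^{k-1}(u_1)$ coincide with vertices of $T(v_1)$ (and vice versa) is delicate. I expect the crux to be showing that the walk $u_1 \to \cdots \to v_1 \to \cdots \to u_1$ of length $2k-2$ cannot be decomposed in a way consistent with all three outliers of each vertex being accounted for; resolving this likely requires a careful case split on whether the two paths share intermediate vertices, with $k$-geodecity ruling out the sharing and the excess-three budget ruling out the disjoint case.
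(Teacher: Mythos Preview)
Your setup is correct and you have identified the right tool (the Neighbourhood Lemma), but the proposal never lands on the decisive observation, and the $2k-2$ cycle idea is a red herring: a cycle of length $2k-2 \geq k+1$ does not by itself contradict $k$-geodecity, and no amount of bookkeeping on shared intermediate vertices of the two long paths will close the argument for general $k$.

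The missing step is this. From $O(u)=\{v,v_3,v_4\}$ the Neighbourhood Lemma gives
\[
O(u_1)\cup O(u_2)=N^+(\{v,v_3,v_4\})=\{u_2,v_1,v_7,v_8,v_9,v_{10}\},
\]
and symmetrically $O(v_1)\cup O(u_2)=\{u_2,u_1,u_7,u_8,u_9,u_{10}\}$. Now use the positional data you already extracted, but in the direction you did not try: since $v_1\in N^{k-1}(u_1)$ we have $v_1\notin O(u_1)$, so the occurrence of $v_1$ in the first multiset must come from $O(u_2)$; symmetrically $u_1\in O(u_2)$. But $O(u_2)$ is contained in the first multiset $\{u_2,v_1,v_7,v_8,v_9,v_{10}\}$, and $u_1$ equals neither $u_2$ nor $v_1$, so $u_1\in\{v_7,v_8,v_9,v_{10}\}=N^2(v_1)$. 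Combined with $u_1\in N^{k-1}(v_1)$ and $k$-geodecity, this forces $k-1=2$, i.e.\ $k=3$; for $k\geq 4$ you are already done. The case $k=3$ then needs a short direct check (placing $u_1=v_9$, $v_1=u_9$ and comparing $N^2(v_1)$ against $\{u_7,u_8,u_{10}\}$ to produce a repeated $\leq 3$-path), which is a finite verification rather than the delicate case split you were anticipating.
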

	\begin{proof}
	Suppose that $O(u) = \{ v,v_3,v_4\} $, $O(v) = \{ u,u_3,u_4 \} $. By the Neighbourhood Lemma,	\[ O(\{ u_1,u_2\} ) = O(N^+(u)) = N^+(O(u)) = \{ u_2,v_1,v_7,v_8,v_9,v_{10}\} \]
	and
	\[ O(\{ v_1,u_2\} ) = O(N^+(v)) = N^+(O(v)) = \{ u_2,u_1,u_7,u_8,u_9,u_{10}\} .\]
	By Corollary \ref{corollary 2}, $v_1 \in N^{k-1}(u_1)$ and $u_1 \in N^{k-1}(v_1)$, so we must have $u_1,v_1 \in O(u_2)$. As $O(u_2) \subset N^+(O(u))$, it follows that
	$u_1 \in N^2(v_1)$, so $k = 3$. We can now put $u_9 = v_1$, $v_9 = u_1$. As $N^2(v_1)\cap O(u) = \emptyset $ and $u \not \in N^2(v_1)$, $\{ v_7,v_8,v_{10}\} = \{ u_7,u_8,u_{10}\} $.
	$u_{10} \in N^2(v_1)$ implies that there are two distinct $\leq 3$-paths from $u_4$ to $u_{10}$, contradicting $3$-geodecity.
\end{proof}
	
	We will now identify an outlier of $u$ and $v$ using the Neighbourhood Lemma.
	\begin{theorem}\label{theorem 2}
		For $k \geq 3$, $v_1 \in O(u)$ and $u_1 \in O(v)$.
	\end{theorem}
	\begin{proof}
	Assume for a contradiction that $O(v) = \{ u,u_3,u_4\} $ and $v_1 \in O(u)$. Let $k \geq 4$. $v$ can reach $u_1$ by a $\leq k$-path, so by Corollary \ref{corollary 2} $u_1 \in N^{k-1}(v_1)$. Suppose that $x \in (T_{k-2}(u_1) - \{ u_1\} ) \cap N^{k-1}(v_1)$ and write $N^+(x) = \{ x_1,x_2\} $. Clearly $x_1,x_2 \not \in \{ u,u_3,u_4\} $, so $x_1,x_2 \in T_k(v)$. However, by $k$-geodecity $x_1, x_2 \not \in T(u_2) \cup T(v_1)$, so we are forced to conclude that
	$x_1 = x_2 = v$, which is absurd. It follows from the Contraction Lemma that for any vertex $w \in T_{k-2}(u_1) - \{ u_1,u_3,u_4\} $ we have $d(u_1,w) = d(v_1,w)$. In particular, $N^2(u_1) = N^2(v_1)$. However, as $u_1 \in N^{k-1}(v_1)$, this implies the existence of a
	$(k-1)$-cycle through $u_1$.
	
	Now set $k = 3$. We can put $v_9 = u_1$. $N^2(u_1) \cap O(v) = \emptyset $, so $N^2(u_1) \subset \{ v,v_3,v_4,v_7,v_8, v_{10}\} $. $v_4$ has paths of length $3$ to
	every vertex in $N^2(u_1)$, so  $v_4, v_{10} \not \in N^2(u_1)$, yielding $N^2(u_1) = \{ v,v_3,v_7,v_8\} $. Without loss of generality, $u_7 = v_3$. $u_7 \not \rightarrow u_8$, so $u_8 = v$ and $N^+(v_3) = N^+(u_7) = N^+(u_4)$, which is impossible. 
\end{proof}

	The next stage of our approach is to show that exactly one member of $N^+(v_1)$ is also an outlier of $u$ and similarly for	$v$. This will be accomplished by analysing the possible positions of $u_3$, $u_4$, $v_3$ and $v_4$ in Fig. \ref{fig:kgeq3setup}. The possibilities are described
	in the following lemma.
	\begin{lemma}\label{lemma 6}
		For $k \geq 4$, \[ \{ u_3,u_4\} \subset \{ v_3,v_4\} \cup O(v)\] and \[ \{ v_3,v_4\} \subset \{ u_3,u_4\} \cup O(u).\]
	\end{lemma}
	\begin{proof}
	Let $u_3 \not \in N^+(v1) \cup O(v)$. By Corollary \ref{corollary 1} and Theorem \ref{theorem 2}, $u_3 \in N^{k-1}(v_1)$. By $k$-geodecity, $u_7,u_8 \not \in T(u_2) \cup T(v_1)$.
	
	Also for $k \geq 4$ we cannot have $v \in N^+(u_3)$. Therefore $O(v) = \{ u_1,u_7,u_8\} $. Hence $v$ can reach $u_4$ by a $\leq k$-path. We cannot	have $u_4 \in N^{k-1}(v_1)$, or the same argument would imply that $N^+(u_4) \subset O(v) = \{ u_1,u_7,u_8\} $. By Corollary \ref{corollary 1} we can assume	that $u_4 = v_4$. As $u \not \in O(v)$, we have $u \in N^{k-1}(v_1)$. Since $u_4 = v_4$, to avoid $k$-cycles we must conclude that $u \in N^{k-2}(v_3)$. Likewise $u_3 \in N^{k-2}(v_3$). However, as there is a path $u \rightarrow u_1 \rightarrow u_3$, $v_3$ has a $(k-2)$-path and a $k$-path to $u_3$, which violates
	$k$-geodecity.
\end{proof}
	
	Firstly, we show using the Neighbourhood Lemma that $O(u)$ does not contain both out-neighbours of $v_1$ and vice versa.
	\begin{lemma}\label{lemma 7}
		For $k \geq 4$, $N^+(u_1) \cap N^+(v_1) \not = \emptyset $.
	\end{lemma}
	\begin{proof}
	Suppose that $\{ u_3,u_4\} $ and $\{ v_3,v_4\} $ are disjoint. Then by Theorem \ref{theorem 2} and Lemma \ref{lemma 6} we have $O(u) = \{ v_1,v_3,v_4\} $, $O(v) =
	\{ u_1,u_3,u_4\} $. The Neighbourhood Lemma yields
	\[ N^+(O(v)) = \{ u_3,u_4,u_7,u_8,u_9,u_{10}\} = O(v_1) \cup O(u_2).\]
	Recall that $N^-(u_1) = \{ u^-,u\} $, $N^-(v_1) = \{ v^-, v\} $, $N^+(u^-) = \{ u_1,u^+\} $, $N^+(v^-) = \{ v_1,v^+\} $. Then as $u_2 \not = u^+,v^+$, it follows
	by Theorem \ref{theorem 2} that $u^+ \in O(u)$ and $v^+ \in O(v)$. If $u^+ = v_1$, then, as $T(u_2) \cap (T(u_1) \cup T (v_1)) = \emptyset $, examining $T_k(u^-)$ we
	see that we would have $T(u_2) \subseteq \{ u^-\} \cup O(u^-)$, so that $M(2,k-1) \leq 4$, which is impossible. Without loss of generality,	$u^+ = v_3$, $v^+ = u_3$. Then $v_1$ and $u^-$ have $v_3$ as a unique common out-neighbour, so by Theorem \ref{theorem 2}
	\[ u_1 \in O(v_1) \subset \{ u_3,u_4,u_7,u_8,u_9,u_{10}\} ,\]
	which contradicts $k$-geodecity. 
\end{proof}
	It will now be demonstrated that $u$ cannot reach both out-neighbours of $v_1$ by $\leq k$-paths, so that $O(u)$ contains exactly one out-neighbour of $v_1$, again with a similar result for $v$.
	\begin{lemma}\label{lemma 8}
		For $k \geq 4$, $N^+(u_1) \not = N^+(v_1)$.
		\end{lemma}
	\begin{proof}
	Let $N^+(u_1) = N^+(v_1) = \{ u_3,u_4\} $. If $u$ can reach $v$ by a $\leq k$-path, so that $v \in N^{k-1}(u_1)$, then there would be a $k$-cycle through $v$, so $v \in O(u)$ and $u \in O(v)$. Hence by Lemmas \ref{lemma 2} and \ref{lemma 3}, there exists a vertex $x$ such that $O(u_1) = \{ v_1,u_2,x\} $ and $O(v_1) = \{ u_1,u_2,x\} $. Since $u_1,v_1 \not \in T(u_2)$, $u_3,u_4 \in O(u_2)$. Applying Theorem \ref{theorem 2} to the pairs $(u,u^-)$ and $(u,v)$, we
	see that $u^+,v_1 \in O(u)$. As $N^+(u_1) = N^+(v_1)$, we cannot have $u^+ \in \{ v,v_1\} $. Therefore $O(u) = \{ v,v_1,u^+\} $ and similarly	$O(v) = \{ u,u_1,v^+\} $.
	Suppose that $u^+ = v^+$. Then $u^-$ and $v^-$ have a single common out-neighbour, so that $v_1 \in O(u^-)$ and $u_1 \in O(v^-)$. Hence $u_1 \in O(v) \cap O(v_1) \cap O(v^-)$. As $G$ is diregular, any vertex is the terminal vertex of exactly $M(2,k)$ paths of length $\leq k$, so
	that every vertex is an outlier of exactly three distinct vertices. As $u_2 \not \in \{ v,v_1,v^-\} $, it follows that $u_2$ can reach $u_1$ by a $k$-path; likewise $u_2$ can reach $v_1$. Therefore $u^-, v^- \in N^{k-1}(u_2)$; however, as $u^+ = v^+$, this is impossible. Hence $u^+ \not = v^+$.
	The Neighbourhood Lemma gives
	\[ N^+(O(u)) = \{ v_1,u_2,u_3,u_4\} \cup N^+(u^+) = O(u_1) \cup O(u_2)\]
	and
	\[ N^+(O(v)) = \{ u_1,u_2,u_3,u_4\} \cup N^+(v^+) = O(v_1) \cup O(u_2).\]
	It follows that $O(u_2)$ contains a vertex $z \in N^+(u^+) \cap N^+(v^+)$. Therefore $u^+, v^+ \not \in T(u_2)$. Examining $T_k(u^-)$, we see that $u^+$
	does not lie in $T(u_1)-\{ u_1\} = T(v_1)-\{ v_1\} $. As already mentioned, $u^+ \not = v,v_1$. Therefore $v$ cannot reach $u^+$ by a $\leq k$-path,	so $u^+ \in O(v) = \{ u,u_1,v^+\} $, a contradiction. 
\end{proof}
		
	Since $u,v$ was an arbitrary pair of vertices with a unique common out-neighbour, Lemmas \ref{lemma 6}, \ref{lemma 7} and \ref{lemma 8} imply the following result.
	\begin{corollary}\label{corollary 4} 
	For $k \geq 4$, if $u,v$ are vertices with a single out-neighbour $u_2$ in common, then $v_1 \in O(u)$, $u_1 \in O(v)$ and $|O(u) \cap N^+(v_1)| = |O(v) \cap N^+(u_1)| = 1$.
	\end{corollary}
	Thanks to Corollary \ref{corollary 4} we can assume that $u_3 = v_3$, $u_4 \not = v_4$, $v_1, v_4 \in O(u)$ and $u_1,u_4 \in O(v)$. Repeated applications of
	Corollary \ref{corollary 4} allow us to prove that there are no diregular $(2,k,+3)$-digraphs for $k \geq 4$ by inductively identifying outliers of $u_2$.
	\begin{theorem}\label{theorem 3}
		There are no diregular $(2,k,+3)$-digraphs for $k \geq 4$.
	\end{theorem}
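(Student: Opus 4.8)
The plan is to use Corollary~\ref{main corollary} as an engine that, starting from the fixed pair $(u,v)$ with common out-neighbour $u_2$, repeatedly manufactures new pairs of vertices sharing a \emph{unique} common out-neighbour lying one step deeper in $T_k$, and to track the outliers this forces until the finiteness of $G$ and $k$-geodecity collide. Throughout we work from the configuration already secured: $u_3=v_3$, $u_4\neq v_4$, with $\{v_1,v_4\}\subseteq O(u)$ and $\{u_1,u_4\}\subseteq O(v)$.

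The first concrete step is to pin down the outlier set of $u_2$ exactly. The pair $(u_1,v_1)$ again has a \emph{unique} common out-neighbour, namely $u_3=v_3$, since by Lemmas~\ref{neighbourhoods not disjoint} and~\ref{neighbourhoods not equal} the out-neighbourhoods $N^+(u_1)$ and $N^+(v_1)$ meet in exactly one vertex. Hence Corollary~\ref{main corollary} applies to $(u_1,v_1)$ and yields $v_4\in O(u_1)$, $u_4\in O(v_1)$, together with $|O(u_1)\cap N^+(v_4)|=|O(v_1)\cap N^+(u_4)|=1$. Feeding these facts into the two Neighbourhood-Lemma identities $O(u_1)\cup O(u_2)=N^+(O(u))$ and $O(v_1)\cup O(u_2)=N^+(O(v))$ (Lemma~\ref{neighbourhoodlemma}) should determine $O(u_2)$ completely; comparing the two resulting descriptions of $O(u_2)$ forces structural identifications among $u^-,v^-,u_4,v_4$ (one expects $u^-=v_4$ and $v^-=u_4$, so that $v_1\to v_4\to u_1$) and pins down $O(u_2)=\{u_1,v_1,u_3\}$.

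With $O(u_2)$ in hand I would iterate along the resulting spine of common out-neighbours. Applying the Neighbourhood Lemma to $u_2$ gives $O(u_5)\cup O(u_6)=N^+(O(u_2))=N^+(u_1)\cup N^+(v_1)\cup N^+(u_3)$, in which the spine vertex $u_3$ occurs with multiplicity two; this both records new forced outliers and produces the next pair of vertices sharing a unique common out-neighbour one level further from $u_2$. The induction hypothesis at level $i$ is that the current spine vertex has its outlier set determined and that its two out-neighbours constitute a fresh pair to which Corollary~\ref{main corollary} applies; the inductive step re-runs the same Neighbourhood-Lemma bookkeeping to advance to level $i+1$ and to identify a further outlier. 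Because each level reaches one step deeper, for $k\geq 4$ the chain can be pushed through enough levels that the vertices it forces either overfill some outlier set (producing a fourth outlier, contradicting the excess $\epsilon=3$) or create two distinct $\leq k$-paths between a pair of vertices, violating $k$-geodecity.

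The hard part will be the bookkeeping of this iteration rather than any single deduction. At each level one must verify that the two out-neighbours of the current spine vertex genuinely share a \emph{unique} common out-neighbour, so that the full strength of Corollary~\ref{main corollary} is available and not merely the Neighbourhood Lemma, and one must check that the outlier identified at each step is new rather than one already recorded. Controlling precisely when the chain wraps around in the finite digraph, and showing that for $k\geq 4$ it cannot close up before the excess budget of three is overrun, is the delicate heart of the argument; this is exactly where the hypothesis $k\geq 4$ enters, since for $k=3$ (treated separately) the spine closes too quickly for the counting contradiction to take hold.
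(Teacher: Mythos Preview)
Your overall instinct---iterate Corollary~\ref{main corollary} to manufacture ever more outliers until the excess budget of three is overrun---is exactly right, and is what the paper does. But your execution goes wrong at the very first concrete step, the determination of $O(u_2)$.

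You claim that the Neighbourhood Lemma identities, together with Corollary~\ref{main corollary} applied to $(u_1,v_1)$, will force $O(u_2)=\{u_1,v_1,u_3\}$ and the identifications $u^-=v_4$, $v^-=u_4$. The latter is immediately impossible: if $u^-=v_4$ then $v\to v_1\to v_4\to u_1$ is a path of length $3\leq k$, contradicting $u_1\in O(v)$. More fundamentally, $u_1$ and $v_1$ are \emph{not} outliers of $u_2$; the Neighbourhood Lemma cannot pin down $O(u_2)$ here because you only know two of the three elements of each of $O(u)$ and $O(u_1)$, so $N^+(O(u))\setminus O(u_1)$ is underdetermined. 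In fact the paper establishes (for $k=4$) that $O(u_2)=\{u_3,u_9,u_{21}\}$, with $u_1,v_1\notin O(u_2)$.

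The paper bypasses this difficulty by identifying elements of $O(u_2)$ \emph{directly} rather than via the Neighbourhood Lemma. The key observation is that if a vertex $w$ has both in-neighbours lying in $T(u_1)\cup T(v_1)$, then neither in-neighbour lies in $T(u_2)$ (by $k$-geodecity applied to the trees $T_k(u)$ and $T_k(v)$), and hence $w\in O(u_2)$. The iteration then runs as follows: $u_3$ has in-neighbours $u_1,v_1$, so $u_3\in O(u_2)$. Corollary~\ref{main corollary} applied to $(u_1,v_1)$ gives $N^+(u_4)\cap N^+(v_4)=\{u_9\}$; since $N^-(u_9)=\{u_4,v_4\}\subset T(u_1)\cup T(v_1)$, we get $u_9\in O(u_2)$. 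Applying the corollary to $(u_4,v_4)$ gives $u_{21}=v_{21}$, and $N^-(u_{21})=\{u_{10},v_{10}\}$ again lies in $T(u_1)\cup T(v_1)$, so $u_{21}\in O(u_2)$. Continuing produces $k-1$ distinct outliers of $u_2$, which finishes $k\geq 5$ immediately; the case $k=4$ then needs a short separate argument from the now-known $O(u_2)=\{u_3,u_9,u_{21}\}$. So the chain should descend through $(u_1,v_1),(u_4,v_4),(u_{10},v_{10}),\dots$ inside $T(u_1)\cup T(v_1)$, accumulating outliers of the fixed vertex $u_2$, rather than moving sideways to $u_5,u_6$ as you propose.
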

	\begin{proof}
	Let $k \geq 5$. As $u_3 \in N^+(u_1)\cap N^+(v_1)$, $u_3 \in O(u_2)$. The pair $(u_1,v_1)$ has $u_3$ as a unique common out-neighbour, so by Corollary \ref{corollary 4} we can assume that $u_9 = v_9$, $u_{10} \not = v_{10}$. $u_4,v_4,u_9 \not \in T(u_2)$, so $u_9 \in O(u_2)$. The pair $(u_4,v_4)$ has $u_9$ as a unique common out-neighbour, so we can assume that $u_{21} = v_{21}$ and $u_{22} \not = v_{22}$. As $u_{10},v_{10},u_{21} \not \in T(u_2)$, $u_{21} \in O(u_2)$. Continuing	further we see that $u_{45} \in O(u_2)$. In fact, it follows inductively that $O(u_2)$ contains at least $k-1$ distinct vertices, which is
	impossible, as $G$ has excess $\epsilon = 3$.
	
	Now set $k = 4$. By the foregoing reasoning, we can write \[ O(u_2) = \{ u_3,u_9,u_{21}\} , O(u) = \{ v_1,v_4,z\} , O(v) = \{ u_1,u_4,z'\} \]
	for some vertices $z,z'$ and assume that $u_3 = v_3$, $u_9 = v_9$, $u_{21} = v_{21}$ and that $u_{22}$ and $v_{22}$ have a single common out-neighbour.	Trivially $u,v,u_1,v_1,u_4,v_4 \not \in  O(u_2)$. Taking into account adjacencies among $u,v,u_1$ and $v_1$, we can assume
	that $u_{23} \rightarrow u$, $u_{25} \rightarrow v_1$, $u_{27} \rightarrow v$ and $u_{29} \rightarrow u_1$. As $u_1 \rightarrow u_4$, $u_4 \not \in N^3(u_6)$. If $u_4 \in N^2(u_{11})$, then $u_{11}$ has two distinct
	$\leq 4$-paths to $u_4$. Thus $u_4 \in N^2(u_{12})$. However, now there are distinct $\leq 4$-paths from $u_{12}$ to $u_9$, violating $4$-geodecity.
	\end{proof}

	\section{Classification of $(2,3,+3)$-digraphs}\label{section: class 2,3,3 digraphs}
	
	The methods of the preceding section do not settle the issue of the existence of a $(2,3,+3)$-digraph. We now demonstrate how these ideas can be extended to show that there are no such digraphs.   
	
	Let $G$ be a diregular $(2,3,+3)$-digraph and fix an arbitrary pair of vertices $u$ and $v$ with a single out-neighbour $u_2$ in common. Theorem \ref{theorem 2} identifies $v_1$ as an outlier of $u$ and $u_1$ as an outlier of $v$. As in the preceding section, we will analyse the possible positions of $u_3,u_4,v_3$ and $v_4$ in $T_3(u)$ and $T_3(v)$. Our first goal is to show that at least one out-neighbour of $v_1$ is an outlier of $u$ and vice versa. Call a pair of vertices $u,v$ with $|N^+(u) \cap N^+(v)|=1$ \emph{bad} if either $O(u) \cap N^+(v_1)=\emptyset $ or $O(v)\cap N^+(u_1)=\emptyset $. Let $u,v$ be an arbitrary bad pair; without loss of generality $u_3,u_4 \in T(v_1)- \{ v_1\} $.
	
	\begin{lemma}
		$|\{ u_3,u_4\} \cap N^2(v_1)| \leq 1$.	
	\end{lemma}
	\begin{proof}
		Suppose that $u_3,u_4 \in N^2(v_1)$. By 3-geodecity applied to $T_3(v_1)$ and $T_3(u)$, we have $N^2(u_1)\cap T(v_1) = N^2(u_1)\cap T(u_2)=\emptyset $, so that $N^2(u_1)=\{ v\} \cup O(v)$. As $u_1\in O(v)$, there would thus be a 2-cycle through $u_1$.	
	\end{proof}
	
	\begin{corollary}
		If $u,v$ is a bad pair then without loss of generality either $u_3=v_3,u_4=v_4$ or $u_3=v_3,u_4=v_9$.
	\end{corollary}
	
	\begin{lemma}
		If $u,v$ is a bad pair then we can assume that $u_3=v_3,u_4=v_4$.	
	\end{lemma}
	\begin{proof}
		Let $u_3=v_3,u_4=v_9$. $u_1$ and $v_1$ have $u_3$ as unique common out-neighbour, so by Theorem \ref{theorem 2} $u_4 \in O(v_1)$, whereas there is a $2$-path from $v_1$ to $u_4$.
	\end{proof}
	
	\begin{theorem}\label{excess three no bad pairs}
		There are no bad pairs.
	\end{theorem}
	\begin{proof}
		$u,v \not \in T(u_1)-\{ u_1\} = T(v_1)-\{ v_1\}$, so $O(u) = \{ v,v_1,x\} , O(v)= \{ u,u_1,x\} $ for some vertex $x$. By the Neighbourhood Lemma \[ O(u_1)\cup O(u_2) = \{ u_2,v_1,u_3,u_4\} \cup N^+(x), O(v_1) \cup O(u_2) = \{ u_2,u_1,u_3,u_4\} \cup N^+(x).\] As $u_1$ can reach $u_3$ and $u_4$, we have $u_3,u_4 \in O(u_2)$. Applying Lemma \ref{lemma 2} to $u_1$ and $v_1$, we see that $u_1 \in O(v_1), v_1 \in O(u_1), u_3\in O(u_4), u_4\in O(u_3)$. Therefore \[ O(u_1)=\{ u_2,v_1,x_1\} , O(u_2) = \{ u_3,u_4,x_2\} , O(v_1) = \{ u_2,u_1,x_1\} ,\] where $N^{+}(x) = \{ x_1,x_2\} $. Let $N^{-}(u_1) = \{ u,u^{-} \} $ and $N^{-}(v_1) = \{ v,v^{-}\} $.  
		Obviously $|N^{+}(u)\cap N^+(u^{-})| = |N^+(v)\cap N^+(v^{-})| = 1$, so $u_2 \in O(u_1) \cap O(v_1) \cap O(u^{-})\cap O(v^{-})$. $u^{-},v^{-} \not \in \{ u_1,v_1\} $, so it follows that $u^{-} = v^{-}$ and $N^{+}(u^{-}) = \{ u_1,v_1\} $. As $N^{+}(u_1) = N^{+}(v_1)$, this contradicts $3$-geodecity.	
	\end{proof}
	
	Theorem \ref{excess three no bad pairs} shows that we can assume that $v_1,v_4\in O(u)$ and $u_1,u_4\in O(v)$. The next step is to show that $u$ has exactly one outlier in $N^+(v_1)$ and $v$ has exactly one outlier in $N^+(u_1)$.
	\begin{lemma}
		Either $O(u) \not = T_1(v_1)$ or $O(v) \not = T_1(u_1)$.
	\end{lemma}
	\begin{proof}	
		Let $O(u)= \{ v_1,v_3,v_4\} $ and $O(v) = \{ u_1,u_3,u_4\} $. Applying the Neighbourhood Lemma to $u$ and $v$ yields $N^+(O(u)) = \{ v_3,v_4,v_7,v_8,v_9,v_{10} \} = O(u_1)\cup O(u_2)$ and $N^+(O(v)) = \{ u_3,u_4,u_7,u_8,u_9,u_{10}\} = O(v_1)\cup O(u_2)$. If $u_3\in O(u_2)$, then $u_3\in \{ v_3,v_4,v_7,v_8,v_9,v_{10} \} $, contradicting $u_3\in
		O(v)$. Similarly $u_4,v_3,v_4 \not \in O(u_2)$. We can thus assume that \[ O(u_1) = \{ v_3,v_4,v_7\} , O(u_2) = \{ u_8,u_9,u_{10}\} = \{ v_8,v_9,v_{10}\} , O(v_1) = \{ u_3,u_4,u_7\} .\] 
		As $v \not \in O(u)$, $v \in N^2(u_1)$. $\{ u_8,u_9,u_{10}\} = \{ v_8,v_9,v_{10}\} $ implies that $v = u_7$. Likewise $u = v_7$. Therefore $N^3(u_2) = \{ u,u_1,u_3,u_4,u_7=v,v_1,v_3,v_4\} $. $u$ and $v$ have a common out-neighbour, so we can set $u_{11} \rightarrow u , u_{13} \rightarrow v $. We have $u\rightarrow u_1$, so $u_1 \in N^2(u_6)$. If $u_{13} \rightarrow u_1$, then $u_{13}$ would have two $\leq 3$-paths to $v$, so $u_{14}\rightarrow u_1$ and similarly $u_{12}\rightarrow v_1$. $v_1\in N^2(u_5)$ implies that $v_3,v_4\in N^2(u_6)$. $u_{13}$ can already reach $v_3$ and $v_4$ by $3$-paths via $v$, so we are forced to conclude that $N^+(u_{14}) = \{ v_3,v_4\} $, contradicting $u_{14}\rightarrow u_1$.
	\end{proof}
	
	\begin{lemma}
		$O(u) \not = T_1(v_1)$ and $O(v) \not = T_1(u_1)$.
	\end{lemma}
	\begin{proof}
		By the preceding lemma at least one of these inequalities is valid. Let $O(u) = \{ v_1,v_3,v_4\} $ and $u_1,u_3\in O(v)$ but $u_4 \not \in O(v)$. $u_4$ must lie in $N^2(u_1)$, say $u_4 = v_7$. We have $\{ v,v_8,v_9,v_{10}\} \subseteq \{ u,u_7,u_8,u_9,u_{10}\} $. As $u_4 = v_7$, $N^+(u_4) \cap \{ v_8,v_9,v_{10}\} = \emptyset $, so $u_4 \rightarrow v$, say $u_9 = v$, and $\{ v_8,v_9,v_{10}\} = \{ u,u_7,u_8\} $. If $v_8 = u$, then $v_3$ would have two distinct $\leq 3$-paths to $u_4$; hence we can set $u = v_9, u_7 = v_8, u_8 = v_{10}$. As $u_1$ and $v_3$ have unique common out-neighbour $u_4$, from Theorem \ref{theorem 2} it follows that $u_7 \in O(u_1)$, which is plainly false.	
	\end{proof}
	
	\begin{corollary}
		$|O(u) \cap N^+(v_1)| = |O(v) \cap N^+(u_1)| = 1$.	
	\end{corollary}
	
	Without loss of generality, we can assume that $u_3\in T_3(v), v_3\in T_3(u), u_4\in O(v), v_4\in O(u)$. There are now two possibilities, depending upon the distance from $v$ to $u_3$: either $u_3 = v_3$ or we can put $u_3 = v_9, v_3 = u_9$.
	\newline
	\newline
	{\bf Case 1: $u_3 = v_3$} 
	\newline
	\newline
	We can define the vertex $x$ by $O(u) = \{ v_1,v_4,x\} $. Also $u_1,u_4\in O(v)$. Let $N^+(x) = \{ x_1,x_2\} $. 
	As $v,v_9,v_{10} \not \in \{ v_1,v_4\} $, there are three essentially different possibilities: 
	a) $x \not \in \{ v,v_9,v_{10}\} $, b) $x = v$ and c) $x = v_9$.
	\newline
	\newline
	{\bf Case 1.a): $x \not \in \{ v,v_9,v_{10}\} $}
	\newline
	\newline
	In this case $O(u)= \{ v_1,v_4,x\} , O(v) = \{ u_1,u_4,x\} $. $u$ can reach each of $v,v_9,v_{10}$, so $\{ v,v_9,v_{10} \} = \{ u,u_9,u_{10}\} $. We can assume that $u = v_9, v = u_9$ and $u_{10} = v_{10}$. It is obvious that $u_3,u_{10} \in O(u_2)$. $u_1$ and $v_1$ have the single out-neighbour $u_3$ in common, so $v_4,u\in O(u_1), u_4,v\in O(v_1)$. Applying the Neighbourhood Lemma to $u$ and $v$, $N^+(O(u)) = O(u_1)\cup O(u_2) = \{ u_3,v_4,u,u_{10},x_1,x_2\} $, and $N^+(O(v)) = O(v_1) \cup O(u_2) = \{ u_3,u_4,v,u_{10},x_1,x_2\} $, so without loss of generality $O(u_1) = \{ u,v_4,x_1\} , O(u_2) = \{ u_3,u_{10},x_2\} $ and $O(v_1) = \{ u_4,v,x_1\} $. Now, $u^+ \in O(u) = \{ v_1,v_4,x\} $ and by 3-geodecity
	$u^+ \not = v_1,v_4$, so that $u^+ = x$. Similarly $v^+ = x$, so by $3$-geodecity applied to $u^-$ and $v^-$, $x_2\in T(u_2)$, contradicting $x_2\in O(u_2)$.
	\newline
	\newline
	{\bf Case 1.b): $x = v$}
	\newline
	\newline
	Now $O(u) = \{ v,v_1,v_4\} $. $v_9$ and $v_{10}$ are not outliers of $u$, 
	so $N^+(u_4) \cap N^+(v_4) \not = \emptyset $. By Theorem \ref{theorem 2}, $u^+ \in O(u) = \{ v,v_1,v_4\} $, so that $u^-$ has distinct $\leq 3$-paths to either $u_3$ or a vertex in $N^+(u_4)\cap N^+(v_4)$.
	\newline
	\newline
	{\bf Case 1.c): $x = v_9$}
	\newline
	\newline
	In this case $O(u) = \{ v_1,v_4,v_9\} $. As 
	$v_{10} \not \in O(u)$, without loss of generality, either i) $v_{10} = u$ or ii) $v_{10} = u_{10}$. 
	\newline
	\newline
	{\bf Case 1.c)i): $x = v_9, v_{10} = u$}
	\newline
	\newline
	Without loss of generality $v = u_{10}$, so that $O(v) = \{ u_1,u_4,u_9\} $. Evidently $u_3\in O(u_2)$. $u_1$ and $v_1$ have a single common out-neighbour, so $v_4\in O(u_1), u_4\in O(v_1)$ and $|O(u_1)\cap \{ v_9,u\} | = |O(v_1)\cap \{ u_9,v\} | = 1$. Applying the Neighbourhood Lemma, $N^+(O(u)) = O(u_1)\cup O(u_2) = \{ u_3,v_4,v_9,u\} \cup N^+(v_9)$ and $N^+(O(v)) = O(v_1)\cup O(u_2) = \{ u_3,u_4,u_9,v\} \cup N^+(u_9)$. If $u \in O(u_2)$, then $u_9\rightarrow u$ and there are distinct $\leq 3$-paths from $u_4$ to $u_2$, so $u \in O(u_1)$. Similarly $v \in O(v_1)$. Hence $d(u_1,v_9) = d(v_1,u_9) = 3$. If $v_9 \in N^2(u_3)$, then there are two $\leq 3$-paths from $v_1$ to $v_9$, so $u_9\rightarrow v_9$ and, by the same reasoning $v_9\rightarrow u_9$, so that $G$ would contain a digon.
	\newline
	\newline
	{\bf Case 1.c)ii): $x = v_9, v_{10} = u_{10}$}
	\newline
	\newline
	Now $u_9 = v, O(v) = \{ u,u_1,u_4\} $ and $u_3,u_{10} \in O(u_2)$. From $N^+(u_1)\cap N^+(v_1) = \{ u_3\} $, it follows that $v_4,v_9\in O(u_1),u_4,v\in O(v_1)$. The Neighbourhood Lemma yields $N^+(O(u)) = O(u_1)\cup O(u_2) = \{ u_3,v_4,v_9,u_{10}\} \cup N^+(v_9)$ and $N^+(O(v)) = O(v_1)\cup O(u_2) = \{ u_1,u_2,u_3,u_4,v,u_{10}\} $. As
	$u_2 \not \in O(u_2)$, the second equation implies that $O(v_1) = \{ u_4,v,u_2\} , O(u_2) = \{ u_1,u_3,u_{10}\} $ and $O(u_1) = \{ v_4,v_9,y\} $, where $N^+(v_9) = \{ u_1,y\} $. $u^+ \in O(u) = \{ v_1,v_4,v_9\} $. This implies the existence of distinct $\leq 3$-paths from $u^-$ to $u_3,u_{10}$ or $u_1$.
	\newline
	\newline
	{\bf Case 2: $u_3 = v_9, v_3 = u_9$}
	\newline
	\newline
	Write $O(u) = \{ v_1,v_4,x\} , O(v) = \{ u_1,u_4,y\} $. By $3$-geodecity, $v_7,v_8$ and $v_{10}$ do not lie in $\{ u_3,v_3,u_7,u_8\} $, so $\{ v_7,v_8,v_{10}\} = \{ u,u_{10},x\} $. Likewise $\{ u_7,u_8,u_{10}\} = \{ v,v_{10},y\} $.
	$u \not = v_{10}$ and $v \not = u_{10}$, so without loss of generality $u = v_7, v = u_7$ and $\{ v_8,v_{10} \} = \{ u_{10},x\} , \{ u_8,u_{10}\} = \{ v_{10},y\} $. Suppose
	that $u_{10} \not = v_{10}$. Then $u_{10} = v_8 = y$ and $v_{10} = u_8 = x$, so that $u_8\in O(u)$, which is absurd. It follows that $u_{10} = v_{10}$, $v_8 = x$ and $u_8 = y$, so that $O(u)= \{ v_1,v_4,v_8\} , O(v) = \{ u_1,u_4,u_8\} $. By the Neighbourhood Lemma, $N^+(O(u)) = O(u_1)\cup O(u_2) = \{ v_3,v_4,u_3,u_{10}\} \cup N^+(v_8)$ and $N^+(O(v)) = O(v_1)\cup O(u_2) = \{ u_3,u_4,v_3,u_{10}\} \cup N^+(u_8)$. $u_1$ can reach $v_3,u_3$ and $u_{10}$, so $O(u_1) = \{ v_4\} \cup N^+(v_8) , O(u_2) = \{ u_3,v_3,u_{10}\} $ and $O(v_1) = \{ u_4\} \cup N^+(u_8)$. Thus $N^3(u_2) = \{ u,u_1,u_4,u_8,v,v_1,v_4,v_8\} $. We can set $u_{11}\rightarrow u$, $u_{12}\rightarrow v_1$, $u_{13}\rightarrow v$ and $u_{14} \rightarrow u_1$. However, wherever $u_4$ lies in $N^3(u_2$) we have a violation of $3$-geodecity. In conclusion, we have the following theorem.
	
	\begin{theorem}\label{no diregular (2,3,+3) digraphs}
		There are no diregular $(2,3,+3)$-digraphs.	
	\end{theorem}
	
	There exists a Cayley $(2,3,+5)$-digraph with order 20, so $(2,3)$-geodetic cages have order $18$, $19$ or $20$. By
	Theorem \ref{no diregular (2,3,+3) digraphs} if $(2,3)$-cages have order $18$, then they must be non-diregular. It would be of great interest to identify extremal digraphs for these parameters. The question of the existence of non-diregular $(2,k,+3)$-digraphs also remains open.
	\newline
	\newline	
	{\bf Acknowledgements}
	\newline
	\newline
	The author thanks the two anonymous reviewers for their suggestions for improving this article and Prof. J. Sir\a'a\v n and	Dr. G. Erskine for helpful discussion of the manuscript.


\begin{thebibliography}{47}
		
		\bibitem{AraBalOls} Araujo-Pardo, G., Balbuena, C. and Olsen, M., \emph{On $(k,g;l)$-dicages.} Ars Comb. 92 (2009), 289-301.
		
		\bibitem{BanIto} Bannai, E. and Ito, T., \emph{On finite Moore graphs.} J. Fac. Sci., Univ. Tokyo 20 (1973), 191-208.
		
		\bibitem{BanIto2} Bannai, E. and Ito, T., \emph{Regular graphs with excess one.} Discrete Math. 37 (2-3) (1981), 147-158.
		
		\bibitem{BasMilPleZna} Baskoro, E.T., Miller, M., Plesn\'{i}k, J. and Zn\'{a}m, \S., \emph{Digraphs of degree 3 and order close to the Moore bound.} J. Graph Theory 20 (3) (1995), 339-349.
		
		\bibitem{BasMilSirSut} Baskoro, E.T., Miller, M., \v Sir\a'a\v n, J. and Sutton, M., \emph{Complete characterization of almost Moore digraphs of degree three.} J. Graph Theory 48 (2005), 112-126.
		
		\bibitem{BehChaWal} Behzad, M., Chartrand, G. and Wall, C. E., \emph{On minimal regular digraphs with given girth.} Fundam. Math., 69 (3) (1970), 227-231.
		
		\bibitem{BriTou} Bridges, W.G., and Toueg, S., \emph{On the impossibility of directed Moore graphs.} J. Comb. Theory B29 (1980), 339-341.
		
		\bibitem{ConGimGonMilMir} Conde, J., Gimbert, J., Gonz\'{a}lez, J., Miller, M. and Miret, J.M., \emph{On the nonexistence of almost Moore digraphs.} Eur. J. Comb. 39 (2014), 170-177.
	
		
		\bibitem{ConGimGonMirMor2} Conde, J., Gimbert, J., Gonz\`alez, J., Miret, J.M. and Moreno, R., \emph{Nonexistence of almost Moore digraphs of diameter four.} Electron. J. Comb. 20(1) (2013).
		
		\bibitem{ConGimGonMirMor} Conde, J., Gimbert, J., Gonz\`alez, J., Miret, J.M. and Moreno, R., \emph{Nonexistence of almost Moore digraphs of diameter three.} Electron. J. Comb. 15 (2008).
		
		\bibitem{Dam} Damerell, R.M., \emph{On Moore graphs.} Math. Proc. Cambridge Philos. Soc. 74 (2), Cambridge University Press (1973), 227-236.
		
		\bibitem{ErdFajHof} Erd\H{o}s, P., Fajtlowicz, S. and Hoffman, A.J., \emph{Maximum degree in graphs of diameter 2.} Networks 10 (1980), 87-90.
		
		\bibitem{ExoJaj} Exoo, G. and Jajcay, R., \emph{Dynamic cage survey.} Electron. J. Comb. 1000 (2011), DS16-May.
		
		\bibitem{FioYebAle} Fiol, M.A., Alegre, I. and Yebra, J.L.A., \emph{Line digraph iterations and the $(d,k)$-digraph problem for directed graphs.} Proc. 10th Int. Symp. Comput. Architecture (1983), 174-177.
		
		\bibitem{Gim} Gimbert, J., \emph{On the existence of $(d,k)$-digraphs.} Discrete Math. 197 (1999), 375-391.
		
		\bibitem{Heu} Heuchenne, C., \emph{Sur une certaine correspondance entre graphes.} Bull. Soc. Roy. Sci. Liege 33 (1964), 743-753.
				
		\bibitem{HofSin} Hoffman, A.J. and Singleton, R.R., \emph{On Moore graphs with diameter 2 and 3.} IBM J. Res. Develop. 4 (1960), 497-504.
		 
		\bibitem{KurTsu} Kurosawa, K. and Tsujii, S., \emph{Considerations on diameter of communication networks.} Electron. Commun. Jpn. 64A (4) (1981), 37-45.
		
		\bibitem{MilFri} Miller, M. and Fri\v s, I., \emph{Maximum order digraphs for diameter 2 or degree 2.} Pullman Volume of Graphs and Matrices, Lecture Notes in Pure and Applied Math. 139 (1992), 269-298.
		
		\bibitem{MirSil} Miller, M., Miret, J.M. and Sillasen, A.A., \emph{On digraphs of excess one.} Discrete Appl. Math. (238) (2018), 161-166.
		
		\bibitem{MilSir} Miller, M. and \v Sir\a'a\v n, J., \emph{Digraphs of degree two which miss the Moore bound by two.} Discrete Math. 226 (1-3) (2001), 269-280.
		
		\bibitem{MilSir2} Miller, M. and \v Sir\a'a\v n, J., \emph{Moore graphs and beyond: A survey of the degree/diameter problem.} Electron. J. Comb. Dynamic Survey DS14 (2005).
		
		
		\bibitem{She} Shen, J., \emph{On the girth of digraphs.} Discrete Math., 211 (1-3) (2000), 167-181.
		
		\bibitem{SilThesis} Sillasen, A.A., \emph{Digraphs of small defect or excess.} Department of Mathematical Sciences, Aalborg
		University. (Ph.D. Report Series; No. 26 - 2014).
		
		\bibitem{Sil} Sillasen, A.A., \emph{On $k$-geodetic digraphs with excess one.} Electron. J. Graph Theory Appl. 2 (2) (2014), 150-159.
		
		\bibitem{Tui} Tuite, J., \emph{Digraphs with degree two and excess two are diregular.} Discrete Math. 342 (5) (2019), 1233-1244.
		
		\bibitem{Tui2} Tuite, J., \emph{On diregular digraphs with degree two and excess two.} Discrete Appl. Math. 238 (2018), 167-174.
		
		
		\bibitem{TuiErs} Tuite, J. and Erskine, G., \emph{Networks with order close to the Moore bound.} Preprint.
	
		
		
		
		
		
	\end{thebibliography}
\end{document}